\newtheorem{theorem}{Theorem}
\newtheorem{lemma}{Lemma}
\newtheorem{proposition}{Proposition}
\begin{document}
\title{Ding injective envelopes in the category of complexes}
\thanks{2010 MSC: 16E05, 16E10}
\thanks{Key Words: Ding injective modules, Ding injective complexes}
\author{James Gillespie}
\address{J.G. \ Ramapo College of New Jersey \\
         School of Theoretical and Applied Science \\
         505 Ramapo Valley Road \\
         Mahwah, NJ 07430\\ U.S.A.}
\email[Jim Gillespie]{jgillesp@ramapo.edu}
\urladdr{http://pages.ramapo.edu/~jgillesp/}

\author{Alina Iacob}
\address{A.I. \ Department of Mathematical Sciences \\
         Georgia Southern University \\
         Statesboro (GA) 30460-8093 \\ U.S.A.}
\email[Alina Iacob]{aiacob@GeorgiaSouthern.edu}
\urladdr{https://sites.google.com/a/georgiasouthern.edu/aiacob/home}

\begin{abstract}
A complex $X$ is called Ding injective if there exists an exact sequence of injective complexes $\ldots \rightarrow E_1 \rightarrow E_0 \rightarrow E_{-1} \rightarrow \ldots$ such that $X = Ker(E_0 \rightarrow E_{-1})$, and the sequence remains exact when the functor $Hom(A,-)$ is applied to it, for any $FP$-injective complex $A$.
We prove that, over any ring $R$, a complex is Ding injective if and only if it is a complex of Ding injective modules. We use this to show that the class of Ding injective complexes is enveloping over any ring.
\end{abstract}
\maketitle




\section{introduction}

It is an important question to establish relationships between a complex $X$ and the $R$-modules
$X_n$, $n \in Z$, for a given ring $R$. For example, it is well known that a complex $I$ is injective (projective, flat respectively) if and only if it is an exact complex and all its cycles are injective (projective, flat respectively) modules. We consider here the relationship between the Ding injectivity of a complex $X$ and the Ding injectivity of the modules $X_n$, $n \in Z$.

The Ding injective modules were introduced by Ding and Mao, 
 in \cite{ding:08:ding.inj}, where they were called Gorenstein $FP$-injective modules. Later, Gillespie renamed these modules, calling them Ding injective modules (in \cite{gil:17:ding}). The definition uses $FP$-injective modules, so we recall first that a module $A$ is called $FP$-injective (or absolutely pure) if $Ext^1(F,A)=0$ for all finitely presented $R$-modules $F$. We use $\mathcal{FI}$ to denote this class of modules. The Ding injective modules are the cycles of the exact complexes of injective modules that remain exact when applying a functor $Hom(A, -)$, with $A$ any $FP$-injective module.\\

The Ding injective complexes are defined in a similar manner: a complex $X$ is Ding injective if it is a cycle of an exact complex of injective complexes $E = \ldots \rightarrow E_1 \rightarrow E_0 \rightarrow E_{-1} \rightarrow \ldots$ such that $Hom(A,E)$ is exact for any $FP$-injective complex $A$. This $Hom$ too is just the abelian group of chain maps from $A$ to $E$. \\

This class of complexes has been studied before - see for example \cite{YLL13}, \cite{gil:17:ding}, and \cite{Geng} . In \cite{YLL13} (Theorem 3.20),  the authors prove that a complex $X$ is Ding injective if and only if each term $X_i$ is Ding injective and any chain map $A \rightarrow X$ from any $FP$-injective complex $A$ is null homotopic (i.e. the Hom complex $\mathcal{H}om (A,X)$ is exact, for any $FP$-injective complex $A$). In \cite{gil:17:ding}, Corollary 4.4, it is  proved that over a Ding-Chen ring (i.e. a two sided coherent ring having finite $FP$-injective dimension on both sides), the Ding injective complexes are precisely the complexes of Ding injective modules. In~\cite{YE20} (Theorem~4.1) this result was extended to all coherent rings. Here we prove that this result holds over any ring: a complex is Ding injective if and only if it is a complex of Ding injective modules.



It was recently proved (\cite{gil:iac}, Theorem 5.4) that over any ring $R$, the class of Ding injective modules, $\mathcal{DI}$, is the right half of a perfect cotorsion pair in $R-Mod$. Consequently, the class of Ding injective modules is enveloping over any ring. We show that a similar result holds for the class of Ding injective complexes: over any ring $R$, the class of Ding injective complexes is an enveloping class in $Ch(R)$, the category of chain complexes of $R$-modules.

We prove first (Theorem 2) that, over any ring $R$, a complex is Ding injective if and only if it is a complex of Ding injective modules. Then we show (Proposition 3) that the class of Ding injective complexes is special preenveloping in $Ch(R)$.
Theorem 3 shows that in fact the class of Ding injective complexes is enveloping in $Ch(R)$, for any ring $R$.
\section{preliminaries}\label{sec-preliminaries}



In this paper, $R$ denotes an associative ring with unity, $R-Mod$ denotes the category of left
$R$-modules and $Ch (R)$ denotes the abelian category of complexes of left R-modules.
A complex $$\ldots \rightarrow C_1 \xrightarrow{d_1} C_0 \xrightarrow{d_0} C_{-1} \rightarrow \ldots$$ will be denoted by $(C, d)$, or simply by $C$.
The $n$th cycle of $C$, $Ker(d_n)$, is denoted $Z_n(C)$. The $n$th boundary of $C$, $Im(d_n)$, is denoted $B_n(C)$. The $n$th homology module of $C$ is $H_n(C) = Z_n(C)/B_{n+1}(C)$. The complex $C$ is exact (or acyclic) if $H_n(C)=0$ for all $n$.\\

For two complexes $C, D \in Ch(R)$, we let $Hom(C, D)$ denote the abelian group of chain maps from $C$ to $D$ in $Ch(R)$. We use the notation $Ext^i
(C, D)$ where $i \ge 1$ for the groups that arise from the right derived functor of $Hom$.\\

We use the notation suggested by Brown in \cite{brown} and denote by $\mathcal{H}om (C,D)$ the usual complex formed from two complexes $C$ and $D$. Then $Z_0 \mathcal{H}om (C,D)$ is the group $Hom(C,D)$ of morphisms from $C$ to $D$.


As already mentioned, the Ding injective complexes are  defined in a similar manner with the Ding injective modules. The definition uses $FP$-injective complexes, so we recall that a complex $C$ is $FP$-injective  if $Ext^1(F, C) = 0$ for every finitely presented complex $F$.\\ A complex $X$ is called \emph{Ding injective} if it is a cycle of an exact complex of injective complexes $E = \ldots \rightarrow E_1 \rightarrow E_0 \rightarrow E_{-1} \rightarrow \ldots$ such that $Hom(A,E)$ is exact for any $FP$-injective complex $A$.\\
It is known (\cite{YLL13}, Theorem 3.20), that a complex $X$ is Ding injective if and only if each term $X_i$ is Ding injective and any chain map $A \rightarrow X$ from any $FP$-injective complex $A$ is null homotopic (or equivalently, the Hom complex $\mathcal{H}om (A,X)$ is exact, for any $FP$-injective complex $A$).


We recall that a complex $M$ has a Ding injective \emph{preenvelope} if there exists a chain map $l: M \rightarrow A$ with $A$ a Ding injective complex and
such that for any Ding injective complex $A'$, any chain map
$h: M \rightarrow A'$ factors through $l$ ($ h = v l$ for some $v
\in Hom(A,A')$).
\[
\begin{diagram}
\node{M}\arrow{s,l}{h}\arrow{e,t}{l}\node{A}\arrow{sw,b,..}{v}\\
\node{A'}
\end{diagram}
\]

Such a preenvelope $l$ is said to be an \emph{envelope} if
it has one more property: any $v \in Hom(A,A)$ such that $vl=l$ is
an automorphism of $A$.\\
Any Ding injective preenvelope $l : M \rightarrow A$ is called \emph{special} if we have $Ext^1(cok(i),X)=0$ for all Ding injective complexes $X$. 

One way to prove that a class $\mathcal{L}$ is special preenveloping is showing that $\mathcal{L}$ is the right half of a complete cotorsion pair in $Ch(R)$.

Given a class of objects $\mathcal{C}$ in a Grothendieck category $\mathcal{A}$, we denote by $\mathcal{C}^\bot$ its right orthogonal class, i.e. the class of objects $X$ such that $Ext^1(C,X)=0$ for any $C \in \mathcal{C}$. The left orthogonal class of $\mathcal{C}$ is defined dually. We recall that a pair of classes of objects $(\mathcal{C}, \mathcal{L})$, is a \emph{cotorsion pair} if $\mathcal{C} ^ \bot = \mathcal{L}$ and $^ \bot \mathcal{L} = \mathcal{C}$. A cotorsion pair is \emph{complete} if for any object $M$ there are exact sequences $0 \rightarrow L \rightarrow C \rightarrow M \rightarrow 0$ and respectively $0 \rightarrow M \rightarrow L' \rightarrow C' \rightarrow 0$ with $C, C' \in \mathcal{C}$ and $L, L' \in \mathcal{L}$. We also recall that a cotorsion pair is said to be \emph{hereditary} if $Ext^i(C,L)=0$ for all $i \ge 1$, all $C \in \mathcal{C}$, all $L \in \mathcal{L}$. It is known that this is equivalent with the class $\mathcal{C}$ being closed under kernels of epimorphisms, and it is also equivalent with the condition that $\mathcal{L}$ is closed under cokernels of monomorphisms.\\


A cotorsion pair $(\mathcal{C}, \mathcal{L})$ is said to be \emph{perfect } if $\mathcal{C}$ is covering and $\mathcal{L}$ is enveloping. \\

Given a cotorison pair $(\mathcal{A}, \mathcal{B})$ in the category of $R$-modules, Gillespie introduced (\cite{Gill04}) four classes of complexes in $Ch(R)$ that are associated with it: \\
1. An acyclic complex $X$ is an {$\mathcal{A}$-complex} if $Z_j(X) \in \mathcal{A}$ for all integers $j$. We denote by $\widetilde{\mathcal{A}}$ the class of all acyclic $\mathcal A$-complexes.\\
2. An acyclic complex $U$ is a $\mathcal{B}$-complex if $Z_j(X) \in \mathcal{B}$ for all integers $j$. We denote by $\widetilde{\mathcal{B}}$ the class of all acyclic $\mathcal B$-complexes.\\
3. A complex $Y$ is a {dg-$\mathcal{A}$ complex} if each $Y_n \in \mathcal{A}$ and each map $Y\to U$ is null-homotopic, for each complex $U\in \widetilde{\mathcal{B}}$. We denote by $ dg (\mathcal{A})$ the class of all dg-$\mathcal{A}$ complexes. \\
4. A complex $W$ is a dg-$\mathcal{B}$ complex if each $W_n \in \mathcal{B}$ and each map $V\to W$ is null-homotopic, for each complex $V\in \widetilde{\mathcal{A}}$. We denote by $ dg (\mathcal{B})$ the class of all dg-$\mathcal{B}$ complexes.\\

Yang and Liu showed in \cite{yang-liu11}, Theorem 3.5, that when $(\mathcal{A}, \mathcal{B})$ is a complete hereditary cotorsion pair in $R$-Mod, the pairs $(dg (\mathcal{A}), \widetilde{\mathcal{B}})$ and $(\widetilde{\mathcal{A}}, dg (\mathcal{B}))$ are complete and hereditary cotorsion pairs in the category of complexes $Ch(R)$. Moreover, by Gillespie \cite{Gill04}, we have that  $\widetilde{\mathcal{A}}=dg (\mathcal{A})\bigcap \mathcal E$ and $ \widetilde{\mathcal{B}}=dg (\mathcal{B})\bigcap \mathcal E$ (where $\mathcal E$ is the class of all acyclic complexes). For example, from the (complete and hereditary) cotorsion pairs $(Proj,R\textrm{-Mod})$ and $(R\textrm{-Mod},Inj)$, one obtains the standard (complete and hereditary) cotorsion pairs $(\mathcal E,dg(Inj))$ and $(dg(Proj),\mathcal E)$.\\

We use $\mathcal{DI}$ to denote the class of Ding injective modules. It was recently proved (\cite{gil:iac}, Theorem~5.4), that $(^\bot \mathcal{DI}, \mathcal{DI})$ is a complete hereditary cotorsion pair over any ring $R$ (in fact, this is a perfect cotorsion pair). Then by \cite{yang-liu11}, Theorem 3.5, $(\widetilde{^\bot \mathcal{DI}}, dg (\mathcal{DI}))$ is a complete hereditary cotorsion pair in the category of complexes, $Ch(R)$.

We recall below some of the results that we use.\\
\begin{theorem}(this is part of \cite{wl}, Theorem 2.10)
Let $C$ be a complex. Then the following statements are equivalent.\\
(1) $C$ is $FP$-injective.\\
(2) $C$ is exact and $Z_n(C)$ is $FP$-injective in $R-Mod$ for all $n \in Z$.
\end{theorem}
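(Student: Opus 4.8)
The plan is to reduce the $FP$-injectivity of a complex $C$ to the vanishing of $Ext^1$ against the ``sphere'' complexes. For a module $M$ and an integer $n$, write $S^n(M)$ for the complex with $M$ in degree $n$ and $0$ elsewhere, and $D^n(M)$ for the complex with $M$ in degrees $n$ and $n-1$ joined by the identity map. Recall that a finitely presented complex is a bounded complex all of whose terms are finitely presented modules. Any such $F$ carries a finite filtration by its brutal truncations $\sigma_{\le k}F$, whose successive quotients are the spheres $S^k(F_k)$; applying $\Hom(-,C)$ to the short exact sequences $0 \to \sigma_{\le k-1}F \to \sigma_{\le k}F \to S^k(F_k)\to 0$ and inducting on the length of the filtration (via the long exact $Ext$ sequence) shows that $\ext{Ch(R)}{1}(F,C)=0$ for every finitely presented $F$ as soon as $\ext{Ch(R)}{1}(S^n(G),C)=0$ for every finitely presented module $G$ and every $n$. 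Since each $S^n(G)$ is itself a finitely presented complex, $C$ is $FP$-injective if and only if $\ext{Ch(R)}{1}(S^n(G),C)=0$ for all finitely presented $G$ and all $n$.

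Next I would record the relevant $Ext$ computations in $Ch(R)$. Because $D^n(P)$ is a projective complex whenever $P$ is projective, applying $D^n$ to a projective resolution of $M$ yields $\ext{Ch(R)}{i}(D^n(M),C)\cong \ext{R}{i}(M,C_n)$. Splicing the short exact sequences $0\to S^{n-1}(R)\to D^n(R)\to S^n(R)\to 0$ gives the resolution $\cdots\to D^{n-1}(R)\to D^n(R)\to S^n(R)\to 0$ by disk complexes, which are projective; applying $\Hom(-,C)$ then identifies $\ext{Ch(R)}{i}(S^n(R),C)$ with $H_{n-i}(C)$, so in particular $\ext{Ch(R)}{1}(S^n(R),C)\cong H_{n-1}(C)$, whence $FP$-injectivity of $C$ forces $C$ to be exact. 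The heart of the argument is the identity, valid for any exact complex $C$ and any module $M$,
$$\ext{Ch(R)}{i}(S^n(M),C)\;\cong\;\ext{R}{i}(M,Z_n(C))\qquad(i\ge 0).$$
To prove it I would resolve $M$ by projectives $P_\bullet$, resolve each $S^n(P_i)$ by the disk complexes $D^{n-j}(P_i)$ as above, and totalize to obtain a projective resolution of $S^n(M)$; then $\Hom(-,C)$ turns it into the double complex $\Hom_R(P_\bullet,\sigma_{\le n}C)$, so that $\ext{Ch(R)}{*}(S^n(M),C)$ is the hyper-$Ext$ $\mathbf{R}\Hom_R(M,\sigma_{\le n}C)$. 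Since $C$ is exact, the brutal truncation $\sigma_{\le n}C$ has its only homology, $Z_n(C)$, in the top degree, so it is a bounded-below resolution of $Z_n(C)$ and the hyper-$Ext$ collapses to $\ext{R}{*}(M,Z_n(C))$.

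With these in hand both directions are immediate. If $C$ is $FP$-injective, then it is exact by the $S^n(R)$ computation, and for every finitely presented module $G$ we get $\ext{R}{1}(G,Z_n(C))\cong \ext{Ch(R)}{1}(S^n(G),C)=0$, so each $Z_n(C)$ is $FP$-injective; this is $(1)\Rightarrow(2)$. Conversely, if $C$ is exact with every $Z_n(C)$ $FP$-injective, then for all finitely presented $G$ and all $n$ we have $\ext{Ch(R)}{1}(S^n(G),C)\cong \ext{R}{1}(G,Z_n(C))=0$, and the reduction of the first paragraph yields $\ext{Ch(R)}{1}(F,C)=0$ for every finitely presented complex $F$; this is $(2)\Rightarrow(1)$.

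The main obstacle is the exact-complex identity. The delicate points are the convergence of the totalization (which is why it matters that $\sigma_{\le n}C$ is bounded below, so that $\mathbf{R}\Hom_R(M,-)$ can be read off degreewise) and the replacement of $\sigma_{\le n}C$ by its only nonzero homology $Z_n(C)$. One must be careful that this is legitimate for the Yoneda $Ext$ in the abelian category $Ch(R)$ rather than in the derived category: it is, precisely because we first replace $S^n(M)$ by an honest projective resolution and only then appeal to a quasi-isomorphism in the (honest) second variable. Everything else, namely the filtration induction and the two disk-and-sphere resolution computations, is routine.
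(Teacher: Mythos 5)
The paper does not actually prove this statement: it is quoted verbatim from Wang--Liu (reference [wl], Theorem 2.10), so there is no internal proof to compare against, and your argument has to be judged as a standalone proof. As such it is correct, and it is essentially a self-contained reconstruction of the standard argument. The two pillars are sound: (i) the reduction from arbitrary finitely presented complexes to sphere complexes $S^n(G)$ via the brutal-truncation filtration $0 \to \sigma_{\le k-1}F \to \sigma_{\le k}F \to S^k(F_k) \to 0$ and the long exact sequence in $Ext$; and (ii) the computations $Ext^{1}_{Ch(R)}(S^n(R),C)\cong H_{n-1}(C)$ and, for exact $C$, $Ext^{i}_{Ch(R)}(S^n(M),C)\cong Ext^{i}_{R}(M,Z_n(C))$, obtained from the disk-complex resolution of $S^n(M)$ and the collapsing spectral sequence of the first-quadrant double complex $Hom_R(P_i,C_{n-j})$. (A direct check with $C=D^n(N)$, where $Z_n(C)=0$ and every extension by $S^n(M)$ splits, confirms that the cycle index in your identity is the right one.) You also correctly flag and dispose of the one genuinely delicate point, namely that the quasi-isomorphism between $\sigma_{\le n}C$ and $S^n(Z_n(C))$ may only be exploited after $S^n(M)$ has been replaced by an honest projective resolution, since each $Hom_R(P_i,-)$ is exact. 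The only ingredient you use without justification that is itself a nontrivial (though standard) lemma is the identification of the finitely presented objects of $Ch(R)$ with the bounded complexes of finitely presented modules; this follows from the description of the finitely generated projective objects of $Ch(R)$ as finite direct sums of disks $D^n(R)$, and is established in [wl] and in [christensen], so it deserves a citation or a one-line proof rather than a bare ``recall''. With that caveat, the proof is complete.
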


We also recall that a short exact sequence of modules $0 \rightarrow M'\rightarrow M  \rightarrow M'' \rightarrow 0$ is \emph{pure exact} if and only if the induced sequence of abelian groups $0 \rightarrow Hom(Y, M')\rightarrow Hom(Y, M) \rightarrow Hom(Y, M'') \rightarrow 0$ is exact for any finitely presented module Y.\\
By definition, a complex $F$ is \emph{pure acyclic} if it is acyclic and such that the short exact sequences of modules $0 \rightarrow Z_n(F) \rightarrow F_n \rightarrow Z_{n-1}(F) \rightarrow 0$ are pure for all $n$.\\

The following characterization of pure acyclic complexes is part of \cite{E}, Proposition 2.2.
\begin{proposition}(\cite{E}, Proposition 2.2)
The following are equivalent for a complex $C$.\\
(1) $C$ is a pure acyclic complex.\\
(2) $Hom(Y, C)$ is acyclic for any finitely presented module $Y$.
\end{proposition}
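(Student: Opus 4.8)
The plan is to read $\Hom(Y,C)$ as the complex obtained by applying the functor $\Hom(Y,-)$ degreewise, so that its $n$th term is $\Hom(Y,C_n)$ with differential $\Hom(Y,d_n)$, and to exploit the factorization of each $d_n$ through the cycles: writing $\pi_n\colon C_n\twoheadrightarrow Z_{n-1}(C)$ and $\iota_{n-1}\colon Z_{n-1}(C)\hookrightarrow C_{n-1}$, one has $d_n=\iota_{n-1}\pi_n$ whenever $C$ is acyclic. Both implications then reduce to comparing kernels and images of the induced maps, using the purity criterion recalled just above.

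For the implication $(1)\Rightarrow(2)$, I would fix a finitely presented module $Y$ and apply $\Hom(Y,-)$ to the pure short exact sequences $0\to Z_n(C)\to C_n\to Z_{n-1}(C)\to 0$. By the characterization of purity, each such sequence stays exact, so $\Hom(Y,\pi_n)$ is surjective with kernel $\Hom(Y,Z_n(C))$ and $\Hom(Y,\iota_{n-1})$ is injective. Since $\Hom(Y,d_n)=\Hom(Y,\iota_{n-1})\,\Hom(Y,\pi_n)$ and $\Hom(Y,\iota_{n-1})$ is a monomorphism, the kernel of $\Hom(Y,d_n)$ equals the kernel of $\Hom(Y,\pi_n)$, namely the copy of $\Hom(Y,Z_n(C))$ sitting inside $\Hom(Y,C_n)$. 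On the other hand, surjectivity of $\Hom(Y,\pi_{n+1})$ forces the image of $\Hom(Y,d_{n+1})$ to be exactly the image of $\Hom(Y,\iota_n)$, which is again this same copy of $\Hom(Y,Z_n(C))$. Matching kernels and images gives exactness of $\Hom(Y,C)$ at every degree.

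For $(2)\Rightarrow(1)$, the first step is to take $Y=R$: since $R$ is finitely presented and $\Hom(R,C)\cong C$, acyclicity of $\Hom(R,C)$ gives at once that $C$ is acyclic. It then remains to show each sequence $0\to Z_n(C)\to C_n\to Z_{n-1}(C)\to 0$ is pure, i.e. that $\Hom(Y,\pi_n)$ is surjective for every finitely presented $Y$. Given $f\colon Y\to Z_{n-1}(C)$, I would regard $\iota_{n-1}f$ as an element of $\Hom(Y,C_{n-1})$; because its composite with the differential $C_{n-1}\to C_{n-2}$ vanishes, it is a cycle of $\Hom(Y,C)$ in degree $n-1$, hence a boundary by hypothesis. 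A preimage $h\colon Y\to C_n$ satisfies $\iota_{n-1}\pi_n h=\iota_{n-1}f$, and cancelling the monomorphism $\iota_{n-1}$ yields $\pi_n h=f$, the required lift.

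The argument is essentially diagrammatic, so the only real care is needed in the bookkeeping of $(1)\Rightarrow(2)$: one must track the kernels and images as honest subgroups of $\Hom(Y,C_n)$ and verify that left-exactness of $\Hom(Y,-)$ together with the purity-driven surjectivity pins both of them to the single subgroup $\Hom(Y,Z_n(C))$. The converse direction is a clean lifting argument once the case $Y=R$ has supplied the acyclicity of $C$.
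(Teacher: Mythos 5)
Your argument is correct and complete in both directions: the kernel/image bookkeeping in $(1)\Rightarrow(2)$ and the lifting argument via $Y=R$ and the boundary condition in $(2)\Rightarrow(1)$ are exactly the standard proof of this equivalence. The paper itself gives no proof at all here --- it simply imports the statement from Emmanouil's \emph{On pure acyclic complexes} (Proposition 2.2), so your write-up supplies the elementary verification that the paper outsources to that reference.
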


Theorem 1 and Proposition 1 imply that every $FP$-injective complex is pure acyclic.

By \cite{Sto}, Definition 6.7, a complex $X$ is called \emph{coacyclic} if $Ext^1(X,I) =0$ for every complex of injective modules $I$.\\

\begin{lemma}(\cite{Sto}, Lemma 6.10)
Let $\mathcal{G}$ be a locally finitely presentable Grothendieck category. Then $C_{pac}(\mathcal{FI}) \subseteq C_{coac}(\mathcal{G}) \bigcap C(\mathcal{FI})$, where $C_{pac}(\mathcal{FI})$ is the class of pure acyclic complexes of $FP$-injective objects, $C_{coac}(\mathcal{G})$ are the coacyclic complexes, and $C(\mathcal{FI})$ are the complexes of $FP$-injective objects.
\end{lemma}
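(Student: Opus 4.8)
The inclusion $C_{pac}(\mathcal{FI})\subseteq C(\mathcal{FI})$ is immediate from the definitions, since a pure acyclic complex of $FP$-injective objects is in particular a complex of $FP$-injective objects. All of the content lies in the assertion that such a complex $C$ is coacyclic, that is, that $Ext^1(C,I)=0$ for every complex $I$ of injective objects. The plan is to first identify the class $C_{pac}(\mathcal{FI})$ explicitly, and then to verify the orthogonality by passing to the Hom complex.

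First I would show that $C_{pac}(\mathcal{FI})$ is exactly the class of $FP$-injective complexes. If $C$ is a pure acyclic complex of $FP$-injective objects, then for each $n$ the short exact sequence $0\rightarrow Z_n(C)\rightarrow C_n\rightarrow Z_{n-1}(C)\rightarrow 0$ is pure; applying $Hom(F,-)$ for a finitely presented $F$ keeps it exact, and comparing this with the long exact $Ext$ sequence (using $Ext^1(F,C_n)=0$) forces $Ext^1(F,Z_n(C))=0$. Thus every cycle $Z_n(C)$ is $FP$-injective, so by Theorem 1 the complex $C$ is $FP$-injective. Conversely, Theorem 1 together with Proposition 1 shows that every $FP$-injective complex is pure acyclic, and its terms are extensions of $FP$-injective cycles and hence $FP$-injective; so the two classes coincide. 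It therefore suffices to prove that every $FP$-injective complex is coacyclic.

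Next I would reduce the vanishing of $Ext^1(C,I)$ to a statement about the Hom complex. Since $I$ is a complex of injective objects, every short exact sequence $0\rightarrow I\rightarrow E\rightarrow C\rightarrow 0$ in $Ch(R)$ is degreewise split, so such extensions are classified by the group $H_{-1}\mathcal{H}om(C,I)$ of chain-homotopy classes of degree $-1$ maps; hence it is enough to prove that $\mathcal{H}om(C,I)$ is acyclic. To bring purity to bear I would write $C$ as a filtered colimit of contractible complexes $C^\lambda$ --- which is possible precisely because $\mathcal{G}$ is locally finitely presentable and $C$ is pure acyclic --- so that $\mathcal{H}om(C,I)=\varprojlim_\lambda \mathcal{H}om(C^\lambda,I)$ is a cofiltered limit of complexes, each of which is contractible and in particular acyclic.

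The main obstacle is exactly this passage to the limit. Because the Hom complex of two unbounded complexes is a product totalization, the acyclicity of the individual $\mathcal{H}om(C^\lambda,I)$ does not by itself yield acyclicity of $\mathcal{H}om(C,I)$: one must show that $\varprojlim^1$ of the tower $\{\mathcal{H}om(C^\lambda,I)\}_\lambda$ vanishes. This is where both hypotheses are indispensable, since a merely acyclic $C$ need not be coacyclic; the injectivity of the terms $I_q$ forces the transition maps of the system to be surjective, which supplies the Mittag--Leffler condition, and the Milnor exact sequence relating $\varprojlim$ and $\varprojlim^1$ then gives $H_n\mathcal{H}om(C,I)=0$ for all $n$, in particular $H_{-1}\mathcal{H}om(C,I)=0$ and hence $Ext^1(C,I)=0$. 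An equivalent route, closer to the cited argument, is to observe that the coacyclic complexes are closed under transfinite extensions (Eklof's lemma) and that every $FP$-injective complex is filtered by small coacyclic subcomplexes; carrying out the bookkeeping that recognizes those small pieces as coacyclic is then the crux.
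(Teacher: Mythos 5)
The paper offers no proof of this lemma at all: it is quoted verbatim from \cite{Sto}, Lemma 6.10, so the only question is whether your argument stands on its own. Much of it does. The identification of $C_{pac}(\mathcal{FI})$ with the class of $FP$-injective complexes is correct (purity of $0\to Z_n(C)\to C_n\to Z_{n-1}(C)\to 0$ together with $Ext^1(F,C_n)=0$ does force $Ext^1(F,Z_n(C))=0$, and Theorem 1 applies), and the reduction of $Ext^1(C,I)=0$ to the acyclicity of $\mathcal{H}om(C,I)$ via degreewise split extensions is exactly the device the paper itself uses, via \cite{enochs:96:orthogonality}, Corollary 3.3, in the proof of Theorem 2.

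The gap is in the final limit argument, and it is not a technicality. Writing $C=\varinjlim_\lambda C^\lambda$ with each $C^\lambda$ contractible gives $\mathcal{H}om(C,I)=\varprojlim_\lambda\mathcal{H}om(C^\lambda,I)$ over an arbitrary directed poset, not a tower: the Milnor sequence $0\to\varprojlim^1 H_{n+1}\to H_n(\varprojlim)\to\varprojlim H_n\to 0$ and the Mittag--Leffler criterion are valid only for inverse systems indexed by $\mathbb{N}$ (or of countable cofinality), and for uncountable directed systems the higher derived limits $\varprojlim^{i}$, $i\ge 2$, obstruct precisely this kind of conclusion. Moreover, the claim that injectivity of the terms $I_q$ makes the transition maps of the system surjective is unsupported: those maps are $\mathcal{H}om(C^\mu,I)\to\mathcal{H}om(C^\lambda,I)$ induced by the structure maps $C^\lambda\to C^\mu$ of the colimit diagram, which need not be monomorphisms, so $Hom(C^\mu_p,I_q)\to Hom(C^\lambda_p,I_q)$ need not be onto. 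In effect your argument would show that the left orthogonal class of the complexes of injectives is closed under the filtered colimits presenting $C$, which is false in general. Your closing sentence points at the correct repair --- Eklof's lemma together with a filtration of $C$ by small subcomplexes that are again pure acyclic with $FP$-injective components, so that coacyclicity is checked on those pieces --- but that deconstructibility argument is the actual mathematical content of Stovicek's proof, and deferring it as ``bookkeeping'' leaves the lemma unproved.
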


By \cite{christensen}, Corollary 4.6, the category $Ch(R)$ is locally finitely presented. So we obtain:\\
\begin{lemma}
Every pure acyclic complex of $FP$-injective modules is coacyclic.
\end{lemma}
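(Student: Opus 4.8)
The plan is to recognize Lemma 2 as nothing more than the specialization of Lemma 1 to the particular Grothendieck category $\mathcal{G} = R\text{-Mod}$, so that essentially no new argument is needed beyond verifying the hypotheses of Lemma 1 and matching up the notation. First I would check the standing hypothesis of Lemma 1, namely that $\mathcal{G} = R\text{-Mod}$ is a locally finitely presentable Grothendieck category; this is standard, since the finitely presented modules form a generating set of finitely presentable objects and every module is a filtered colimit of finitely presented ones. It is also consistent with the cited fact (\cite{christensen}, Corollary 4.6) that the associated complex category $Ch(R)$ is locally finitely presented.

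Next I would unwind the three classes appearing in Lemma 1 for the choice $\mathcal{G} = R\text{-Mod}$. With this choice $\mathcal{FI}$ is exactly the class of $FP$-injective $R$-modules, so $C(\mathcal{FI})$ is the class of complexes all of whose terms are $FP$-injective, and $C_{pac}(\mathcal{FI})$ is the class of pure acyclic complexes of $FP$-injective modules --- precisely the complexes named in the statement, where pure acyclicity is understood in the sense recalled just before Proposition 1. The one point requiring a little care is that the class $C_{coac}(R\text{-Mod})$ produced by the machinery of \cite{Sto} coincides with the coacyclic complexes as defined in the excerpt, namely the complexes $X$ with $Ext^1(X,I)=0$ for every complex $I$ of injective modules. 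Since this is exactly Definition 6.7 of \cite{Sto}, the identification is immediate.

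With the hypothesis verified and the notation matched, Lemma 1 gives the inclusion $C_{pac}(\mathcal{FI}) \subseteq C_{coac}(R\text{-Mod}) \cap C(\mathcal{FI})$, and reading off membership in the first factor yields the assertion that every pure acyclic complex of $FP$-injective modules is coacyclic. The main obstacle here is therefore bookkeeping rather than mathematics: confirming that $R\text{-Mod}$ satisfies the local finite presentability hypothesis and that the two descriptions of coacyclicity agree. Once these are in place, the conclusion is a direct application of the already-quoted Lemma 1.
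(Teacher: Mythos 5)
Your proposal is correct and follows essentially the same route as the paper: Lemma 2 is obtained by specializing Lemma 1 (Stovicek's Lemma 6.10) to the module case and matching the definitions of pure acyclicity, $FP$-injectivity, and coacyclicity. The only cosmetic difference is that you verify directly that $R$-Mod is locally finitely presentable, while the paper invokes the Christensen--Holm result that $Ch(R)$ is locally finitely presented; both serve to confirm the hypothesis of Lemma 1, and your bookkeeping is if anything slightly more careful.
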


We also recall that by \cite{gil:gor}, Definition 3.4, a complete cotorsion pair $(\mathcal{W}, \mathcal{F})$ is called an \emph{injective cotorsion pair} if the class $\mathcal{W}$ is thick and if $\mathcal{W} \bigcap \mathcal{F}$ coincides with the class of injective objects. \\

The following is \cite{gil:gor}, Proposition 3.6:
\begin{proposition}(Characterizations of injective cotorsion pairs).
Suppose $(\mathcal{W}, \mathcal{F})$ is a complete cotorsion pair in an abelian category $\mathcal{A}$ with enough injectives. Then
each of the following statements are equivalent:\\
(1) $(\mathcal{W}, \mathcal{F})$ is an injective cotorsion pair.\\
(2) $(\mathcal{W}, \mathcal{F})$ is hereditary and $\mathcal{W} \bigcap \mathcal{F}$ equals the class of injective objects.
(3) $\mathcal{W}$ is thick and contains the injective objects.

\end{proposition}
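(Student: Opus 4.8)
The plan is to establish the cyclic chain of implications $(1)\Rightarrow(2)\Rightarrow(3)\Rightarrow(1)$. Throughout I will lean on three facts that hold for \emph{any} cotorsion pair $(\mathcal{W},\mathcal{F})$ with $\mathcal{W}^\perp=\mathcal{F}$ and $^\perp\mathcal{F}=\mathcal{W}$: every injective object lies in $\mathcal{F}$, since $Ext^1(W,E)=0$ whenever $E$ is injective; both $\mathcal{W}$ and $\mathcal{F}$ are closed under direct summands; and $\mathcal{W}$ is closed under extensions, as one reads off the long exact sequence of $Ext(-,F)$. I will also invoke the equivalence recalled in the preliminaries, that the pair is hereditary if and only if $\mathcal{W}$ is closed under kernels of epimorphisms, and the reminder that thickness of $\mathcal{W}$ means closure under direct summands together with the two-out-of-three property on short exact sequences.

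Two of the implications are cheap. For $(1)\Rightarrow(2)$, if the pair is injective then $\mathcal{W}$ is thick, so in particular whenever $0\to A\to B\to C\to 0$ is exact with $B,C\in\mathcal{W}$ we get $A\in\mathcal{W}$; thus $\mathcal{W}$ is closed under kernels of epimorphisms and the pair is hereditary, while $\mathcal{W}\cap\mathcal{F}$ equals the injectives by definition. For $(3)\Rightarrow(1)$, I only have to verify $\mathcal{W}\cap\mathcal{F}=\{\text{injectives}\}$, since thickness is assumed. The inclusion $\supseteq$ combines the standing fact that injectives lie in $\mathcal{F}$ with the hypothesis that $\mathcal{W}$ contains the injectives. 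For $\subseteq$, take $X\in\mathcal{W}\cap\mathcal{F}$ and embed it in an injective, $0\to X\to E\to Y\to 0$ (enough injectives). Since $E,X\in\mathcal{W}$, thickness gives $Y\in\mathcal{W}$; as $X\in\mathcal{F}=\mathcal{W}^\perp$ we get $Ext^1(Y,X)=0$, so the sequence splits and $X$ is a direct summand of the injective $E$, hence injective.

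The substance is in $(2)\Rightarrow(3)$, and the one genuinely nontrivial point there is the \emph{thickness} of $\mathcal{W}$; this is the step I expect to be the main obstacle. That injectives lie in $\mathcal{W}$ is immediate, since $\mathcal{W}\cap\mathcal{F}$ already equals the injectives. For thickness, closure under summands and under extensions are standing facts, and closure under kernels of epimorphisms is exactly heredity, so the only missing case is cokernels of monomorphisms: given $0\to A\to B\to C\to 0$ with $A,B\in\mathcal{W}$, show $C\in\mathcal{W}$. Here I will use completeness to choose a special $\mathcal{W}$-precover $0\to F\to W\to C\to 0$ with $W\in\mathcal{W}$ and $F\in\mathcal{F}$, and then form the pullback $P$ of the two epimorphisms $B\to C$ and $W\to C$. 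This produces $0\to A\to P\to W\to 0$ and $0\to F\to P\to B\to 0$. The first sequence forces $P\in\mathcal{W}$ by closure under extensions; feeding this into the second, heredity (closure under kernels of epimorphisms) forces $F\in\mathcal{W}$, so $F\in\mathcal{W}\cap\mathcal{F}$ is injective. Finally, in $0\to F\to W\to C\to 0$ the monomorphism out of the injective object $F$ splits, realizing $C$ as a direct summand of $W\in\mathcal{W}$ and hence $C\in\mathcal{W}$.

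The reason the cokernel case is the crux is that it is the unique place where all three hypotheses of $(2)$ are needed at once: completeness supplies the special $\mathcal{W}$-precover, heredity converts the pullback into the fact that $F\in\mathcal{W}$, and the identification $\mathcal{W}\cap\mathcal{F}=\{\text{injectives}\}$ upgrades this to injectivity of $F$, which is precisely what makes the final sequence split. The pullback is the device that turns ``closed under extensions and under kernels of epimorphisms'' into ``closed under cokernels of monomorphisms,'' and every naive attempt to argue the cokernel case directly from the long exact $Ext$-sequence stalls on a cokernel of $Hom(B,F)\to Hom(A,F)$ that one cannot control without this extra input.
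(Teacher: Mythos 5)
Your proof is correct. Note that the paper itself gives no argument for this proposition; it is quoted verbatim from [Gillespie, \emph{Gorenstein complexes and recollements from cotorsion pairs}, Proposition 3.6], so there is no in-paper proof to compare against. Your cycle $(1)\Rightarrow(2)\Rightarrow(3)\Rightarrow(1)$ checks out: the two ``cheap'' implications are handled properly (in particular you correctly use thickness plus $X\in\mathcal{F}$ to split $0\to X\to E\to Y\to 0$ in $(3)\Rightarrow(1)$), and the crux --- closure of $\mathcal{W}$ under cokernels of monomorphisms in $(2)\Rightarrow(3)$ --- is resolved by the standard device: pull back $B\twoheadrightarrow C$ against a special $\mathcal{W}$-precover $W\twoheadrightarrow C$, use extension-closure and heredity to land $F\in\mathcal{W}\cap\mathcal{F}=\mathrm{Inj}$, and split $0\to F\to W\to C\to 0$. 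This matches the argument in the cited source in all essentials. One small remark: an alternative to the pullback is pure dimension shifting --- heredity gives $Ext^{2}(C,F')\cong 0$ for all $F'\in\mathcal{F}$ from the long exact sequence of $0\to A\to B\to C\to 0$, and then the sequence $0\to F\to W\to C\to 0$ yields $Ext^{1}(F,F')=0$, so $F\in{}^{\perp}\mathcal{F}\cap\mathcal{F}$ is injective; this reaches the same splitting without constructing $P$. Your observation that the naive long-exact-sequence attack on $Ext^{1}(C,F)$ stalls at the cokernel of $Hom(B,F)\to Hom(A,F)$ is accurate and is precisely why one of these two detours is needed.
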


\section{results}
We start by proving that over any ring $R$, a complex is Ding injective if and only if it is a complex of Ding injective modules.
\begin{theorem}
Let $R$ be any ring. A complex $X$ is Ding injective if and only if it is a complex of Ding injective $R$-modules.
\end{theorem}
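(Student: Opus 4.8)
The plan is to prove the nontrivial implication by reducing, through the criterion of \cite{YLL13} (Theorem~3.20) recalled above, to a single Hom/Ext vanishing, and then to feed that vanishing by coacyclicity (Lemma~2). The forward implication needs no work: if $X$ is Ding injective then each $X_i$ is Ding injective is exactly the ``each term'' half of \cite{YLL13}, Theorem~3.20.

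For the converse, let $X$ be a complex with every $X_n$ Ding injective. By \cite{YLL13}, Theorem~3.20, it remains only to show that $\mathcal{H}om(A,X)$ is exact for every $FP$-injective complex $A$. First I would record the module fact that $Ext^1_R(F,D)=0$ for $F$ $FP$-injective and $D$ Ding injective: the injective coresolution of $D$ coming from its defining complete resolution is $Hom(F,-)$-exact by the very definition of Ding injectivity. Thus $\mathcal{FI}\subseteq {}^\bot\mathcal{DI}$, and by Theorem~1 every $FP$-injective complex $A$ is acyclic with $FP$-injective cycles; since $\mathcal{FI}$ is closed under extensions, each term $A_m$ is $FP$-injective as well. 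Consequently, for any complex $Y$ of Ding injective modules one has $Ext^1_R(A_m,Y_n)=0$ for all $m,n$, so every short exact sequence $0\to Y\to Z\to A\to 0$ in $Ch(R)$ is degreewise split; as degreewise-split extensions are classified by $H_{-1}\mathcal{H}om(A,Y)$, this gives $Ext^1_{Ch(R)}(A,Y)\cong H_{-1}\mathcal{H}om(A,Y)$. Applying this to all shifts of $X$, exactness of $\mathcal{H}om(A,X)$ becomes the single statement $Ext^1_{Ch(R)}(A,Y)=0$ for every complex $Y$ of Ding injective modules.

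The next step is where coacyclicity enters. By Lemma~2 the complex $A$ is coacyclic, so $Ext^1_{Ch(R)}(A,E)=0$ for every complex $E$ of injective modules; combined with the degreewise-split isomorphism above (valid for any complex of injectives), this forces $H_n\mathcal{H}om(A,E)=0$ for all $n$, i.e.\ every chain map from $A$ into a complex of injectives is null-homotopic. I would then take a $dg$-injective resolution coming from the cotorsion pair $(\mathcal{E},dg(Inj))$,
\[
0\to X\to \mathbb{J}\to C\to 0,\qquad \mathbb{J}\in dg(Inj),
\]
where $C=\mathbb{J}/X$ is again a complex of Ding injective modules because $\mathcal{DI}$ is closed under cokernels of monomorphisms (heredity of $({}^\bot\mathcal{DI},\mathcal{DI})$), and $C$ is acyclic since $\mathbb{J}$ is a resolution. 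This sequence is termwise $Hom(A_m,-)$-exact, so $\mathcal{H}om(A,-)$ takes it to a short exact sequence of complexes; as $\mathcal{H}om(A,\mathbb{J})$ is exact, the long exact homology sequence yields $H_n\mathcal{H}om(A,X)\cong H_{n+1}\mathcal{H}om(A,C)$. Everything is thereby reduced to proving that $\mathcal{H}om(A,C)$ is exact when $C$ is an \emph{acyclic} complex of Ding injective modules.

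This last reduction is the main obstacle, and I expect it to be the hard part. All the moves above merely shift the homological degree, so the genuine vanishing must come from the fact that $A$ is not only acyclic but \emph{pure} acyclic -- a property used so far only through the weak consequence $Ext^1_R(FP\text{-inj},\text{Ding-inj})=0$. Here I would use \cite{E} together with Lemma~2: a pure acyclic complex lies in the left orthogonal, inside the homotopy category, of the complexes of injective modules, and the task is to upgrade this orthogonality from complexes of injectives to acyclic complexes of Ding injective modules. Concretely, I would either write $A$ as a direct limit of contractible complexes and kill the resulting $\varprojlim^1$-obstruction using the coacyclic structure, or resolve $C$ within the coderived category so that $\mathcal{H}om(A,C)$ inherits the exactness already established against complexes of injective modules. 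Once this vanishing for acyclic complexes of Ding injective modules is secured, $\mathcal{H}om(A,X)$ is exact for every $FP$-injective $A$, and \cite{YLL13}, Theorem~3.20 completes the proof.
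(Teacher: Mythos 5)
Your forward implication, the reduction (via degreewise-split extensions) of exactness of $\mathcal{H}om(A,X)$ to vanishing of $Ext^1_{Ch(R)}(A,-)$ on complexes of Ding injective modules, and the use of Lemma~2 to get $\mathcal{H}om(A,\mathbb{J})$ exact for complexes of injectives $\mathbb{J}$ are all sound and are genuine ingredients of the paper's argument. But the proof does not close. Your dimension shift $H_n\mathcal{H}om(A,X)\cong H_{n+1}\mathcal{H}om(A,C)$ replaces $X$ by $C$, an acyclic complex of Ding injective modules; since the cycles of $C$ need not be Ding injective, $C$ is just another instance of the original problem (an arbitrary complex of Ding injective modules), and iterating the construction only keeps shifting degrees without ever terminating. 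The two strategies you sketch for the remaining step (writing $A$ as a direct limit of contractible complexes and controlling a $\varprojlim^1$ term, or resolving $C$ in the coderived category) are not arguments, and you say yourself that this is where the difficulty lies. So the central vanishing is asserted, not proved.

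The idea you are missing is to resolve $X$ on the \emph{other} side, using the cotorsion pair $(\widetilde{{}^\bot\mathcal{DI}},\,dg(\mathcal{DI}))$ in $Ch(R)$ induced (via Yang--Liu) from the complete hereditary cotorsion pair $({}^\bot\mathcal{DI},\mathcal{DI})$ of \cite{gil:iac}. Completeness gives $0\to J\to I\xrightarrow{\pi} X\to 0$ with $I\in\widetilde{{}^\bot\mathcal{DI}}$ and $J\in dg(\mathcal{DI})$; degreewise, $I_n$ is an extension-free middle term with $J_n,X_n\in\mathcal{DI}$, so $I_n\in{}^\bot\mathcal{DI}\cap\mathcal{DI}=Inj$ and $I$ is a complex of injectives. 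Now for an $FP$-injective complex $A$, Theorem~1 puts $A$ in $\widetilde{{}^\bot\mathcal{DI}}$, so $Ext^1(A,J)=0$ and every chain map $f:A\to X$ lifts to $g:A\to I$; coacyclicity of $A$ (Lemma~2) then makes $g$, hence $f=\pi g$, null-homotopic, and \cite{YLL13} Theorem~3.20 finishes. The point is that this resolution produces an honest epimorphism from a complex of injectives onto $X$ along which maps lift at the level of chain maps, which is exactly what your dg-injective coresolution cannot provide.
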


\begin{proof}
The condition is necessary by \cite{YLL13} Theorem 3.20.\\
We show that it is also a sufficient condition. Let $X$ be a complex of Ding injective modules. Since $(\widetilde{^\bot \mathcal{DI}}, dg (\mathcal{DI}))$ is a complete cotorsion pair in $Ch(R)$, there is an exact sequence $0 \rightarrow J \rightarrow I \xrightarrow{\pi} X \rightarrow 0$ with $I \in \widetilde{^\bot \mathcal{DI}}$ and with $J \in dg (\mathcal{DI}))$. This gives a short exact sequence of modules, $0 \rightarrow J_n \rightarrow I_n \rightarrow X_n \rightarrow 0$, for any $n \in Z$. Since both $J_n$ and $X_n$ are Ding injective modules, it follows that for each $n$ we have $I_n \in {}^\bot \mathcal{DI} \bigcap \mathcal{DI} = Inj$. So $I$ is an exact complex of injective $R$-modules.\\
Let $C$ be an $FP$-injective complex, and let $f:C \rightarrow X$ be a chain map. Since for each $n$, $Z_n (C)$ is $FP$-injective (by Theorem 1), so in $^\bot \mathcal{DI}$, it follows that $C \in \widetilde{^\bot \mathcal{DI}}$, and therefore $Ext^1(C,J)=0$. Thus $Hom(C,I) \rightarrow Hom(C,X) \rightarrow 0$ is exact. It follows that $f$ factors through $\pi$, $f = \pi g$, for some $g: C \rightarrow I$. So it suffices to show that $g$ is null homotopic.\\


\[
\begin{diagram}
\node{}\node{C}\arrow{sw,t,..}{g}\arrow{s,r}{f}\\
\node{I}\arrow{e,t}{\pi}\node{X}
\end{diagram}
\]

By Theorem 1 and Proposition 1, 
the complex $C$ is pure acyclic. By Lemma 2, 
any pure acyclic complex of $FP$-injective modules, in particular $C$, is coacyclic. So  
$Ext^1(C,I)=0$. By \cite{enochs:96:orthogonality}, Corollary 3.3, this is equivalent to $\mathcal{H}om (C,I)$ being exact. Thus any map from $C$ to $I$, in particular $g$, is homotopic to zero. It follows that $f= \pi g$ is null homotopic. So $X$ is a Ding injective complex by~\cite[Theorem~3.20]{YLL13}.
\end{proof}

We can prove now that the class of Ding injective complexes is the right half of a complete cotorsion pair in $Ch(R)$ (for any ring $R$).\\
We use $dw(\mathcal{DI})$ to denote the class of complexes of Ding injective modules. By Theorem 2, these are precisely the Ding injective complexes.\\

\begin{proposition}
Let $R$ be any ring. The class of Ding injective complexes is special preenveloping in $Ch(R)$.
\end{proposition}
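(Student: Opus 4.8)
The plan is to exhibit the class of Ding injective complexes---which by Theorem~2 is exactly $dw(\mathcal{DI})$---as the right half of a complete cotorsion pair in $Ch(R)$; by the remark recalled in Section~\ref{sec-preliminaries}, this at once gives that it is special preenveloping. The only complete cotorsion pair at hand whose right half consists of complexes with Ding injective entries is $(\widetilde{^\bot \mathcal{DI}},dg(\mathcal{DI}))$, so the entire proposition reduces to the single identity $dg(\mathcal{DI})=dw(\mathcal{DI})$. One inclusion is free: every $X\in dg(\mathcal{DI})$ has $X_n\in\mathcal{DI}$ by definition, so $dg(\mathcal{DI})\subseteq dw(\mathcal{DI})$.

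For the reverse inclusion I would take an arbitrary $X\in dw(\mathcal{DI})$ and apply the completeness of $(\widetilde{^\bot \mathcal{DI}},dg(\mathcal{DI}))$, obtaining a short exact sequence $0\to X\to D\to V\to 0$ with $D\in dg(\mathcal{DI})$ and $V\in\widetilde{^\bot \mathcal{DI}}$. The first step is to pin down the entries of $V$. Termwise this reads $0\to X_n\to D_n\to V_n\to 0$ with $X_n,D_n\in\mathcal{DI}$; since $(^\bot\mathcal{DI},\mathcal{DI})$ is hereditary, $\mathcal{DI}$ is closed under cokernels of monomorphisms, whence $V_n\in\mathcal{DI}$. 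On the other hand $V\in\widetilde{^\bot \mathcal{DI}}$ forces $Z_n(V)\in{}^\bot\mathcal{DI}$ for all $n$, and because $(^\bot\mathcal{DI},\mathcal{DI})$ is an injective cotorsion pair (it is hereditary and meets $\mathcal{DI}$ in $Inj$, so its left class is thick by \cite{gil:gor}, Proposition~3.6) the class ${}^\bot\mathcal{DI}$ is closed under extensions; applied to $0\to Z_n(V)\to V_n\to Z_{n-1}(V)\to 0$ this yields $V_n\in{}^\bot\mathcal{DI}$. Therefore $V_n\in{}^\bot\mathcal{DI}\cap\mathcal{DI}=Inj$, i.e. $V$ is an \emph{acyclic complex of injective modules whose cycles lie in ${}^\bot\mathcal{DI}$}.

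The crux is then to split off $V$. Because $V_n\in{}^\bot\mathcal{DI}$ and $X_n\in\mathcal{DI}$ we have $Ext^1(V_n,X_n)=0$, so the sequence $0\to X\to D\to V\to 0$ is termwise split; such extensions are classified by the relevant homology of $\mathcal{H}om(V,X)$, so it suffices to prove that $\mathcal{H}om(V,X)$ is exact, equivalently that every chain map $V\to X$ is null homotopic. Once this is known the sequence splits, $X$ is a direct summand of $D\in dg(\mathcal{DI})$, and since the right half of a cotorsion pair is closed under direct summands we conclude $X\in dg(\mathcal{DI})$, closing the argument. To obtain the exactness of $\mathcal{H}om(V,X)$ I would show that $V$ is coacyclic: a coacyclic complex of injective objects is null homotopic, and for a contractible $V$ the complex $\mathcal{H}om(V,X)$ is contractible, hence exact. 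Here I expect to re-use the machinery of Theorem~2 and Section~\ref{sec-preliminaries}---purity, Lemma~2, and the equivalence of \cite{enochs:96:orthogonality}, Corollary~3.3 between the vanishing of $Ext^1(-,I)$ and the exactness of $\mathcal{H}om(-,I)$ for complexes of injectives $I$---now applied to $V$ rather than to an $FP$-injective complex. An alternative to coacyclicity is to compute $\mathcal{H}om(V,X)$ directly by dimension shifting along the defining totally acyclic complex of injective complexes of $X$: since $V_n\in{}^\bot\mathcal{DI}$ and the cycles of that complex lie in $\mathcal{DI}$, the functor $\mathcal{H}om(V,-)$ keeps the associated short exact sequences exact, and each injective complex contributes a contractible, hence acyclic, term.

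The step I expect to be the main obstacle is precisely this last one: upgrading the orthogonality that Theorem~2 (through \cite{YLL13}, Theorem~3.20) guarantees against $FP$-injective complexes to orthogonality against the strictly larger class $\widetilde{^\bot \mathcal{DI}}$. The delicate passage is from ``cycles in ${}^\bot\mathcal{DI}$'' to the coacyclicity (equivalently, null homotopy) of $V$, for which the dimension shift does not obviously terminate: over a general ring ${}^\bot\mathcal{DI}$ carries no a priori finiteness (no bound on Ding injective dimension) that would seed the vanishing, so the injective-cotorsion-pair structure of $(^\bot\mathcal{DI},\mathcal{DI})$ established over an arbitrary ring in \cite{gil:iac} must enter in an essential way. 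I would therefore expect the decisive ingredient to be a coacyclicity statement for acyclic complexes of injectives with cycles in ${}^\bot\mathcal{DI}$, proved in the spirit of Lemma~2, and it is there that Theorem~2 does the real work of making the result hold over \emph{any} ring.
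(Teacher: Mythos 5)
Your argument has a genuine gap, and it originates in a wrong premise at the very first step. You assert that the only complete cotorsion pair available whose right half consists of complexes with Ding injective entries is $(\widetilde{{}^\bot\mathcal{DI}},\, dg(\mathcal{DI}))$, and you accordingly reduce the proposition to the identity $dg(\mathcal{DI})=dw(\mathcal{DI})$. But the paper uses a different pair: since $({}^\bot\mathcal{DI},\mathcal{DI})$ is an \emph{injective} cotorsion pair cogenerated by a set (\cite{gil:iac}), Proposition~7.2(1) of \cite{gil:gor} yields directly that $({}^\bot dw(\mathcal{DI}),\, dw(\mathcal{DI}))$ is a complete cotorsion pair in $Ch(R)$; combined with Theorem~2 this finishes the proof immediately. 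The identity you are trying to prove instead is strictly stronger than both the proposition and Theorem~2: Theorem~2 (via \cite{YLL13}) gives null-homotopy of maps into $X$ only from the $FP$-injective complexes, i.e.\ from $\widetilde{\mathcal{FI}}$, whereas $X\in dg(\mathcal{DI})$ requires it from all of $\widetilde{{}^\bot\mathcal{DI}}$. Your identity is equivalent to ${}^\bot dw(\mathcal{DI})=\widetilde{{}^\bot\mathcal{DI}}$, and there is no reason to expect it: for the injective cotorsion pair $(R\textrm{-Mod},Inj)$ the analogous claim $dg(Inj)=dw(Inj)$ already fails over $\mathbb{Z}/4$, where $\cdots\to\mathbb{Z}/4\xrightarrow{2}\mathbb{Z}/4\xrightarrow{2}\cdots$ is a non-contractible acyclic complex of injectives. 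The fact that the paper later proves thickness and closure under direct limits of ${}^\bot dw(\mathcal{DI})$ from scratch, rather than identifying it with $\widetilde{{}^\bot\mathcal{DI}}$, is further evidence that the two pairs are not being claimed to coincide.

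The unproved step is exactly the one you flag yourself: splitting $0\to X\to D\to V\to 0$. You correctly show that $V$ is an acyclic complex of injective modules with cycles in ${}^\bot\mathcal{DI}$, and that the sequence is degreewise split, but neither of your proposed closings reaches the needed null-homotopy. A coacyclic complex of injectives lies in ${}^\bot dw(Inj)\cap dw(Inj)$, which is precisely the class of injective (equivalently, contractible) complexes; hence proving $V$ coacyclic is the same as proving each $Z_n(V)$ injective, i.e.\ $Z_n(V)\in\mathcal{DI}\cap{}^\bot\mathcal{DI}$, and nothing in your setup places $Z_n(V)$ in $\mathcal{DI}$. Lemma~2 cannot be transported to $V$: it applies to \emph{pure acyclic} complexes of \emph{$FP$-injective} modules, whereas $V$ is merely acyclic with cycles in the much larger, thick class ${}^\bot\mathcal{DI}$, so it need not be pure acyclic and the purity machinery of Theorem~2 does not apply. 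Finally, as you note, the dimension shift along the totally acyclic complex of injective complexes defining $X$ never terminates, since over a general ring there is no bound that seeds the vanishing. So the argument does not close; the intended route is the citation of \cite{gil:gor}, Proposition~7.2(1), which sidesteps $dg(\mathcal{DI})$ entirely.
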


\begin{proof}
Since $(^\bot \mathcal{DI}, \mathcal{DI})$ is an injective cotorsion pair cogenerated by a set (\cite{gil:iac}, Lemma 3.7, Theorem 3.11,  and Theorem 5.4), it follows that $(^\bot dw (\mathcal{DI}), dw(\mathcal{DI}))$ is a complete cotorsion pair in $Ch(R)$ (by \cite{gil:gor}, Proposition 7.2(1). So $dw(\mathcal{DI}))$ is special preenveloping. By Theorem 2 above, $dw(\mathcal{DI}))$ is the class of Ding injective complexes.
\end{proof}

We show that, in fact, the class of Ding injective complexes, $dw (\mathcal{DI})$,  is enveloping in $Ch(R)$, for any ring $R$.\\

We show first that $^\bot dw(\mathcal{DI})$ is a covering class in $Ch(R)$. By the proof of Proposition 3, $^\bot dw(\mathcal{DI})$ is a special precovering class. It is known that a precovering class that is also closed under direct limits is covering. So it suffices to show that $^\bot dw(\mathcal{DI})$ is closed under direct limits. The proof uses the following remarks:\\

\begin{lemma}
$(^\bot dw (\mathcal{DI}, dw(\mathcal{DI}))$ is a herediatry cotorsion pair.
\end{lemma}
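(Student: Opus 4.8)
The plan is to verify heredity through the characterization recalled in Section~2: a cotorsion pair $(\mathcal{C},\mathcal{L})$ is hereditary precisely when its right class $\mathcal{L}$ is closed under cokernels of monomorphisms. Applied to our situation, it suffices to show that $dw(\mathcal{DI})$ is closed under cokernels of monomorphisms in $Ch(R)$. The key simplification is that $dw(\mathcal{DI})$ is defined degreewise --- it is exactly the class of complexes all of whose terms are Ding injective modules --- so the whole question can be pushed down to $R$-Mod.

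Concretely, I would start with a short exact sequence of complexes $0 \rightarrow A \rightarrow B \rightarrow C \rightarrow 0$ in which $A, B \in dw(\mathcal{DI})$, and aim to conclude $C \in dw(\mathcal{DI})$. Since a short exact sequence of complexes is exact in each degree, this yields, for every $n \in Z$, a short exact sequence of modules $0 \rightarrow A_n \rightarrow B_n \rightarrow C_n \rightarrow 0$ with $A_n, B_n \in \mathcal{DI}$. If the class $\mathcal{DI}$ of Ding injective modules is closed under cokernels of monomorphisms, then $C_n \in \mathcal{DI}$ for all $n$, whence $C \in dw(\mathcal{DI})$; the desired closure property --- and therefore heredity --- then follows.

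So the heart of the argument is the module-level statement that $\mathcal{DI}$ is closed under cokernels of monomorphisms. This is immediate from the fact, already recorded in the preliminaries, that $(^\bot \mathcal{DI}, \mathcal{DI})$ is an injective (hence hereditary) cotorsion pair over any ring $R$ by \cite{gil:iac}: applying the same characterization of heredity in $R$-Mod, heredity of $(^\bot \mathcal{DI}, \mathcal{DI})$ says exactly that $\mathcal{DI}$ is closed under cokernels of monomorphisms. I do not anticipate a genuine obstacle here; the only point requiring care is making sure the degreewise reduction is legitimate, i.e. that membership in $dw(\mathcal{DI})$ is a purely termwise condition and that cokernels of monomorphisms of complexes are computed degreewise --- both of which are built into the definitions.

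Alternatively, and more in the spirit of the machinery behind Proposition~3, one could observe that the induced pair $(^\bot dw(\mathcal{DI}), dw(\mathcal{DI}))$ is itself an \emph{injective} cotorsion pair in $Ch(R)$ (its right class being the complexes of Ding injective modules, and its intersection with the left class the injective complexes), and then invoke Proposition~2 to deduce heredity directly. The degreewise argument above is the more self-contained route given the results already available in the excerpt.
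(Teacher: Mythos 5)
Your proposal is correct and takes essentially the same route as the paper: both reduce heredity to the closure of $dw(\mathcal{DI})$ under cokernels of monomorphisms, pass to the degreewise short exact sequences of modules, and invoke the heredity of $(^\bot \mathcal{DI}, \mathcal{DI})$ in $R$-Mod from \cite{gil:iac}. The alternative argument you sketch via injective cotorsion pairs is not needed; the degreewise reduction is exactly what the paper does.
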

\begin{proof}
Let $0 \rightarrow A' \rightarrow A \rightarrow A" \rightarrow 0$ be an exact sequence with both $A'$ and $A\in dw(\mathcal{DI})$. Then for each $n$ we have an exact sequence of modules $0 \rightarrow A_n' \rightarrow A_n \rightarrow A_n" \rightarrow 0$, with $A_n', A_n" \in \mathcal{DI}$. Since $(^\bot \mathcal{DI}, \mathcal{DI})$ is a hereditary cotorsion pair, it follows that $A_n" \in \mathcal{DI}$. Thus $A" \in dw(\mathcal{DI})$.
\end{proof}

\begin{lemma}
$^\bot dw(\mathcal{DI})$ is a thick class.
\end{lemma}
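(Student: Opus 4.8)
The plan is to realise $^\bot dw(\mathcal{DI})$ as the left-hand (``trivial'') class of an injective cotorsion pair and then read off thickness from Proposition~2. Set $(\mathcal{W},\mathcal{F})=(^\bot dw(\mathcal{DI}),\,dw(\mathcal{DI}))$. By the proof of Proposition~3 this is a complete cotorsion pair, and by Lemma~3 it is hereditary. Thus, by the equivalence of (1) and (2) in Proposition~2, it suffices to show that $\mathcal{W}\cap\mathcal{F}$ equals the class of injective complexes: this makes $(\mathcal{W},\mathcal{F})$ an injective cotorsion pair, and hence $\mathcal{W}=\,^\bot dw(\mathcal{DI})$ is thick by the very definition of such a pair.

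For the inclusions I would use the disk complexes. Write $D^n(M)$ for the complex with $M$ in degrees $n$ and $n-1$ joined by the identity, so that there are natural isomorphisms $Ext^1_{Ch(R)}(D^n(E),Y)\cong Ext^1_R(E,Y_n)$ and $Ext^1_{Ch(R)}(Y,D^n(M))\cong Ext^1_R(Y_{n-1},M)$. Every injective complex is a direct sum of complexes $D^n(E_n)$ with $E_n$ injective; such a complex clearly lies in $dw(\mathcal{DI})$, and since $Inj\subseteq\,^\bot\mathcal{DI}$ the first isomorphism gives $Ext^1_{Ch(R)}(D^n(E_n),Y)=0$ for every $Y\in dw(\mathcal{DI})$, so (passing to direct sums) injective complexes lie in $\mathcal{W}\cap\mathcal{F}$. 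Conversely, given $X\in\mathcal{W}\cap\mathcal{F}$, the second isomorphism applied to $D^n(W)\in dw(\mathcal{DI})$ for $W\in\mathcal{DI}$ yields $Ext^1_R(X_{n-1},W)=0$, so each $X_{n-1}\in\,^\bot\mathcal{DI}$; combined with $X_{n-1}\in\mathcal{DI}$ this gives $X_{n-1}\in\,^\bot\mathcal{DI}\cap\mathcal{DI}=Inj$. Hence every $X$ in the intersection is at least a complex of injective modules.

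The step I expect to be the crux is upgrading ``complex of injective modules'' to ``injective complex''. Here I would invoke the mapping cone $C$ of the identity map $1_X$, a contractible complex whose terms are $X_n\oplus X_{n-1}$; as these terms are injective, $C$ is an injective complex, and it fits in a short exact sequence $0\to X\to C\to X[1]\to 0$. Since $dw(\mathcal{DI})$ is closed under the shift, $X[1]\in\,^\bot dw(\mathcal{DI})=\mathcal{W}$, while $X\in\mathcal{F}=dw(\mathcal{DI})=\mathcal{W}^{\bot}$; therefore $Ext^1_{Ch(R)}(X[1],X)=0$ and the sequence splits. Thus $X$ is a direct summand of the injective complex $C$ and so is itself injective, completing the inclusion $\mathcal{W}\cap\mathcal{F}\subseteq$ injective complexes. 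The only delicate points are the bookkeeping for the disk isomorphisms and the identification of a contractible complex of injectives with an injective object of $Ch(R)$; the conceptual weight rests on the self-orthogonality $Ext^1_{Ch(R)}(X[1],X)=0$ that comes from $X$ lying in both halves of the cotorsion pair.
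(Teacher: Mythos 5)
Your proof is correct, but it takes a genuinely different route from the paper's. The paper verifies thickness by hand: closure under extensions and direct summands is automatic for a left orthogonal class, closure under kernels of epimorphisms is Lemma 3, and the only real work is closure under cokernels of monomorphisms, which the paper gets by a dimension shift --- first showing $Ext^2(D'',A)=0$ for all $A \in dw(\mathcal{DI})$ from the long exact sequence, then trading $Ext^1(D'',A)$ for $Ext^2(D'',A')$ along a short exact sequence $0 \to A' \to I \to A \to 0$ with $I$ an injective complex. You instead upgrade the pair $(^\bot dw(\mathcal{DI}), dw(\mathcal{DI}))$ to an \emph{injective} cotorsion pair via the characterization in Proposition 2, by computing $^\bot dw(\mathcal{DI}) \cap dw(\mathcal{DI}) = Inj$ with disk complexes and the splitting of $0 \to X \to Cone(1_X) \to X[1] \to 0$; all of these steps check out (in particular the self-orthogonality $Ext^1(X[1],X)=0$ and the identification of a contractible complex of injectives with an injective object of $Ch(R)$ are fine). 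The trade-offs: your argument needs completeness of the cotorsion pair (an input to Proposition 2) and the external characterization of injective cotorsion pairs, whereas the paper's dimension-shifting argument needs neither; on the other hand, you prove strictly more, namely that the pair is an injective cotorsion pair --- which is in fact the full content of the result from \cite{gil:gor} (Proposition 7.2(1)) that the paper cites in Proposition 3 only for completeness, so your route essentially re-derives that citation from scratch rather than isolating the one closure property the lemma actually asserts.
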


\begin{proof}
As a left orthogonal class, $^\bot dw(\mathcal{DI})$ is closed under extensions, and by Lemma 3, it is also closed under kernels of epimorphisms. \\We check that $^\bot dw(\mathcal{DI})$ is closed under cokernels of monomorphisms. Let $0 \rightarrow D' \rightarrow D \rightarrow D" \rightarrow 0$ be a short exact sequence with both $D'$ and $D$ in $^\bot dw(\mathcal{DI})$. Let $A \in dw(\mathcal{DI})$. The long exact sequence $0 = Ext^1(D',A) \rightarrow Ext^2(D",A) \rightarrow Ext^2(D,A) =0$ (by Lemma 3) gives that $Ext^2(D",A)=0$ for all Ding injective complexes $A$. By the definition of the Ding injective complexes, there is an exact sequence $0 \rightarrow A' \rightarrow I \rightarrow A \rightarrow 0$ with $A'$ a Ding injective complex and with $I$ an injective complex. By the above, $Ext^2(D', A')=0$. The associated exact sequence $ 0 = Ext^1(D",I) \rightarrow Ext^1(D", A) \rightarrow Ext^2(D", A')=0$ gives that $Ext^1(D",A)=0$ for all $A \in dw(\mathcal{DI})$. Thus $D" \in {}^\bot dw(\mathcal{DI})$.\\
- As a left orthogonal class, $^\bot dw(\mathcal{DI})$  is closed under direct summands.
\end{proof}

\begin{proposition}
The class $^\bot dw(\mathcal{DI})$ is covering in $Ch(R)$
\end{proposition}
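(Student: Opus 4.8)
The plan is to lean on the reduction already set up before the statement and then replace the naive computation of $Ext^1$ by a dimension shift. By the proof of Proposition 3 the class $^\bot dw(\mathcal{DI})$ is the left half of a complete cotorsion pair, hence special precovering, and a precovering class that is closed under direct limits is covering; so it suffices to prove that $^\bot dw(\mathcal{DI})$ is closed under direct limits. Thus I fix a directed system $(D^i)$ in $^\bot dw(\mathcal{DI})$ with colimit $D=\varinjlim_i D^i$ and aim to show $Ext^1(D,A)=0$ for every $A\in dw(\mathcal{DI})$. It is worth saying at the outset which routes I would \emph{not} take: attacking $Ext^1(D,A)$ directly through the canonical presentation of a directed colimit produces a term $\varprojlim^1_i Hom(D^i,A)$ that I see no way to kill in general, while the exactness of direct limits only gives $Z_n(D)=\varinjlim_i Z_n(D^i)\in {}^\bot\mathcal{DI}$ and hence merely places $D$ in the strictly larger class $\widetilde{^\bot\mathcal{DI}}$, not in $^\bot dw(\mathcal{DI})$.

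Instead I would first establish the cleaner statement $Ext^2(D,A)=0$. As the left half of a cotorsion pair, $^\bot dw(\mathcal{DI})$ is closed under coproducts, so in the standard short exact sequence $0\to K\to L\to D\to 0$ presenting the directed colimit, with $K$ and $L$ coproducts of the $D^i$, both $K$ and $L$ lie in $^\bot dw(\mathcal{DI})$. Since the pair $(^\bot dw(\mathcal{DI}),dw(\mathcal{DI}))$ is hereditary (Lemma 3), we have $Ext^1(K,A)=0$ and $Ext^2(L,A)=0$, and the long exact sequence for $Ext^\ast(-,A)$ squeezes $Ext^2(D,A)$ between these two zeros. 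Hence $Ext^2(D,A)=0$ for every $A\in dw(\mathcal{DI})$.

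Now I would dimension-shift down to $Ext^1$. By Theorem 2 any $A\in dw(\mathcal{DI})$ is a Ding injective complex, so it is a cycle of an exact complex $\mathbf E$ of injective complexes; exactness of $\mathbf E$ supplies a short exact sequence $0\to B\to E\to A\to 0$ in which $E$ is an injective complex and $B$ is again a cycle of $\mathbf E$, hence again Ding injective, so $B\in dw(\mathcal{DI})$. Applying $Ext^\ast(D,-)$ gives the portion $Ext^1(D,E)\to Ext^1(D,A)\to Ext^2(D,B)$, whose left term vanishes because $E$ is injective in $Ch(R)$ and whose right term vanishes by the previous paragraph. Therefore $Ext^1(D,A)=0$, so $D\in {}^\bot dw(\mathcal{DI})$ and the class is closed under direct limits, hence covering. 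The main obstacle, and the whole reason for this detour, is precisely the $\varprojlim^1$ obstruction noted above: the trick is to avoid it by proving the higher vanishing $Ext^2(D,A)=0$ first, where the coproduct presentation and heredity cooperate cleanly, and then transporting it to $Ext^1$ along the defining injective coresolution of $A$ provided by Theorem 2.
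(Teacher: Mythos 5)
Your global strategy coincides with the paper's: reduce to showing that $^\bot dw(\mathcal{DI})$ is closed under direct limits, since it is already special precovering by the proof of Proposition 3. But the step where you actually prove closure under direct limits has a genuine gap. For a directed system $(D^i)_{i\in I}$ over an arbitrary directed set $I$, the canonical presentation of the colimit is $0\to K\to \bigoplus_i D^i\to \varinjlim_i D^i\to 0$, and the kernel $K$ is \emph{not} a coproduct of the $D^i$ in general. What is true is that $K=\bigcup_{j\in I}K_j$ is a directed union of the subobjects $K_j=\Ker(\bigoplus_{i\le j}D^i\to D^j)$, each of which is a direct summand of $\bigoplus_{i\le j}D^i$ isomorphic to $\bigoplus_{i\le j,\,i\neq j}D^i$; only when $I$ has a countable cofinal chain does the telescope $0\to\bigoplus_n D^n\to\bigoplus_n D^n\to\varinjlim_n D^n\to 0$ put $K$ itself in coproduct form. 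So the assertion that ``both $K$ and $L$ lie in $^\bot dw(\mathcal{DI})$ because they are coproducts'' is exactly the point needing an argument: deciding whether the directed union $K$ lies in $^\bot dw(\mathcal{DI})$ is another instance of the closure-under-direct-limits problem you are trying to solve, so as written the argument is circular for uncountable index sets. The paper sidesteps this by first proving that $^\bot dw(\mathcal{DI})$ is thick (Lemma 4) and then invoking the general fact that the left half of a cotorsion pair with thick left class is closed under direct limits (\cite{gil:17:ding}, Proposition 3.2); that cited result is where the bookkeeping for arbitrary directed index sets is carried out.

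Two smaller remarks. First, even if you could place $K$ and $L$ in $^\bot dw(\mathcal{DI})$, the $Ext^2$-vanishing followed by the dimension shift along $0\to B\to E\to A\to 0$ is redundant: it reproduces verbatim the argument by which Lemma 4 shows $^\bot dw(\mathcal{DI})$ is closed under cokernels of monomorphisms, so you could simply quote Lemma 4 and conclude $D\in{}^\bot dw(\mathcal{DI})$ directly from the short exact sequence. Second, that portion of your argument is correct in itself ($E$ is injective in $Ch(R)$, and $B\in dw(\mathcal{DI})$ by Theorem 2), so the only thing that genuinely needs repair is the presentation of the colimit.
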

\begin{proof}
Since $(^\bot dw(\mathcal{DI}), dw(\mathcal{DI}))$ is a cotorsion pair with $^\bot dw(\mathcal{DI})$ thick, it follows that  the class $^\bot dw(\mathcal{DI})$ is closed under direct limits (by \cite{gil:17:ding}, Proposition 3.2). Thus $^\bot dw(\mathcal{DI})$ is a precovering class that is also closed under direct limits. So $^\bot dw(\mathcal{DI})$ is a covering class.
\end{proof}

We recall that a cotorsion pair $(\mathcal{C}, \mathcal{L})$ is called \emph{perfect} if $\mathcal{C}$ is covering and $\mathcal{L}$ is enveloping.\\

In order to prove that the class of Ding injective complexes is enveloping in $Ch(R)$, for any ring $R$, we use the following result (showing that $(^\bot dw(\mathcal{DI}), dw(\mathcal{DI}))$  is a perfect cotorsion pair if and only if $^\bot dw(\mathcal{DI})$ is covering).\\
\begin{proposition}
Let $R$ be any ring. The following are equivalent:\\
1. The class of Ding injective complexes is enveloping.\\
2. $(^\bot dw(\mathcal{DI}), dw(\mathcal{DI}))$ is a perfect cotorsion pair.\\
3. The left orthogonal class of that of Ding injective complexes, $^\bot dw(\mathcal{DI})$, is covering.\\
\end{proposition}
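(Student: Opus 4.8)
The plan is to read the proposition off the single complete cotorsion pair $(^\bot dw(\mathcal{DI}), dw(\mathcal{DI}))$, which is complete and hereditary by Proposition 3 and Lemma 3 and whose right-hand class is exactly the class of Ding injective complexes by Theorem 2. Under this identification the definition of a \emph{perfect} cotorsion pair already splits statement $(2)$ into its two halves: $(2)$ asserts precisely that $^\bot dw(\mathcal{DI})$ is covering \emph{and} that $dw(\mathcal{DI})$ is enveloping. Hence $(2)\Rightarrow(3)$ and $(2)\Rightarrow(1)$ are immediate, the latter using Theorem 2 to rewrite ``$dw(\mathcal{DI})$ is enveloping'' as ``the class of Ding injective complexes is enveloping.'' The whole content of the proposition therefore lies in the reverse implications, which I would organise around the single link that, for this pair, the left class is covering exactly when the right class is enveloping.

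The substantive step is $(3)\Rightarrow(2)$. Here I would invoke the standard cotorsion-theoretic principle that a complete cotorsion pair whose left-hand class is covering is automatically perfect: completeness provides, for every complex $M$, a special $dw(\mathcal{DI})$-preenvelope $0\to M\to L\to C\to 0$ with $L\in dw(\mathcal{DI})$ and $C\in{}^\bot dw(\mathcal{DI})$, and the existence of covers in the complementary class $^\bot dw(\mathcal{DI})$ is precisely what allows one to refine these special preenvelopes into minimal ones, that is, into genuine envelopes. This upgrades the special preenveloping property already recorded in Proposition 3 to an enveloping property, so the pair is perfect; combined with $(2)\Rightarrow(1)$ from the definition, this yields $(3)\Rightarrow(1)$ as well.

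It then remains to feed in the implications out of $(1)$, and here I would close the equivalence by observing that $(3)$ in fact holds unconditionally: by Lemma 4 the class $^\bot dw(\mathcal{DI})$ is thick, hence closed under direct limits by \cite{gil:17:ding}, Proposition 3.2, and a special precovering class closed under direct limits is covering, which is exactly Proposition 4. Since $(3)$ is always true, every implication whose conclusion is $(3)$ holds vacuously, and the chain $(3)\Rightarrow(2)\Rightarrow(1)$ forces all three statements to hold and therefore to be equivalent. The main obstacle is the covers-to-envelopes passage inside $(3)\Rightarrow(2)$: the minimality argument that singles out an envelope among the special $dw(\mathcal{DI})$-preenvelopes of a given complex, using covers in $^\bot dw(\mathcal{DI})$. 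Once that general fact is in hand, the rest is formal bookkeeping with the completeness and heredity of the pair established in Proposition 3 and Lemma 3.
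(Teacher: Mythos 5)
Your overall architecture matches the paper's: $(2)\Rightarrow(1)$ and $(2)\Rightarrow(3)$ are definitional, $(1)\Rightarrow(2)$ follows by pairing the hypothesis with Proposition 4, and the real content sits in $(3)\Rightarrow(2)$. But that is exactly where your argument has a genuine gap. You invoke a ``standard cotorsion-theoretic principle that a complete cotorsion pair whose left-hand class is covering is automatically perfect,'' and sketch it as ``covers in $^\bot dw(\mathcal{DI})$ let one refine special $dw(\mathcal{DI})$-preenvelopes into minimal ones.'' This is not a standard theorem, and you give no mechanism for the refinement. The known criterion --- the one the paper actually uses, namely Theorem 1.4 of Enochs--Jenda--Lopez-Ramos \cite{enochs:04:gorflat.covers} --- says that a hereditary cotorsion pair is perfect if and only if the left class is covering \emph{and}, additionally, every object of the left class has an envelope by the right class. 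If covering on the left already implied perfection, that second condition would be redundant; its presence in the cited theorem is a strong signal that your shortcut is not available. (The general results that do convert covers into envelopes all pass through closure of the left class under direct limits and a nontrivial direct-limit/minimality argument; none of them is the bare statement you assert.)

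What your proposal is missing is the verification of that second condition, which is the substantive part of the paper's proof: for $X \in {}^\bot dw(\mathcal{DI})$, take the injective envelope $0 \rightarrow X \rightarrow I \rightarrow Y \rightarrow 0$. Both $X$ and $I$ lie in $^\bot dw(\mathcal{DI})$ (the latter because the pair is an injective cotorsion pair, so its left class contains the injective complexes), hence $Y \in {}^\bot dw(\mathcal{DI})$ by the thickness of the left class (Lemma 4). Therefore $Ext^1(Y,A)=0$ for all $A \in dw(\mathcal{DI})$, so $X \rightarrow I$ is a special Ding injective preenvelope; and since any endomorphism of $I$ restricting to the identity on $X$ is an automorphism (by the injective envelope property), it is in fact a Ding injective envelope. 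Without this step, or a genuine proof of your claimed covers-to-envelopes principle, the implication $(3)\Rightarrow(2)$ --- and hence the proposition --- is not established. Your closing observation that $(3)$ holds unconditionally is correct and harmless, but it does not repair the missing implication.
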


\begin{proof}
1 $\Rightarrow$ 2. By Proposition 4, every Ding injective complex has a $^\bot dw(\mathcal{DI})$ cover.\\ 

3 $\Rightarrow$ 2. By Lemma 3,  $(^\bot dw(\mathcal{DI}), dw(\mathcal{DI}))$ is a hereditary cotorsion pair. By \cite{enochs:04:gorflat.covers}, Theorem 1.4, this cotorsion pair $(^\bot dw(\mathcal{DI}), dw(\mathcal{DI}))$ is perfect if and only if $^\bot dw(\mathcal{DI})$ is covering and every $X \in {}^\bot dw(\mathcal{DI})$ has a Ding injective envelope.\\
Let $X \in {}^\bot dw(\mathcal{DI})$. Consider the exact sequence $0 \rightarrow X \rightarrow I \rightarrow Y \rightarrow 0$ with $X \rightarrow I$ the injective envelope of $X$. Since both $X$ and $I$ are in $^\bot dw(\mathcal{DI})$, it follows (by Lemma 4) that $Y \in {}^\bot dw(\mathcal{DI})$. So the sequence is $Hom(-, dw(\mathcal{DI}))$ exact. Thus $X \rightarrow I$ is a special Ding injective preenvelope of $X$. Since any $u : I\xrightarrow{} I$ that is the identity on $X$ is an automorphism of $I$, it follows that $X \rightarrow I$ is a Ding injective envelope.

2 $\Rightarrow$ 1 and 2 $\Rightarrow$ 3 follow from the definition of a perfect pair.
\end{proof}

\begin{theorem}
Let $R$ be any ring. The class of Ding injective complexes is enveloping in $Ch(R)$.
\end{theorem}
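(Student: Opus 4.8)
The plan is to combine Proposition 4 with Proposition 5, since together they reduce the statement to facts already established. The key observation is that Proposition 5 provides an equivalence between the class of Ding injective complexes being enveloping (statement 1) and the left orthogonal class $^\bot dw(\mathcal{DI})$ being covering (statement 3). Thus the entire theorem collapses to verifying that $^\bot dw(\mathcal{DI})$ is covering, which is precisely the content of Proposition 4.

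Concretely, the steps I would carry out are these. First I would invoke Proposition 4 to record that $^\bot dw(\mathcal{DI})$ is covering in $Ch(R)$. Then I would apply the implication $3 \Rightarrow 1$ of Proposition 5 to conclude directly that the class $dw(\mathcal{DI})$ is enveloping. Finally, I would appeal to Theorem 2 to identify $dw(\mathcal{DI})$ with the class of Ding injective complexes, so that the enveloping statement is exactly the assertion of the theorem. In this sense the proof is a one-line deduction: covering of the left class together with the equivalence of Proposition 5 yields enveloping of the right class.

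The genuine difficulty has already been absorbed into the preceding results, so the theorem itself is effectively a corollary. If I were to prove the enveloping property directly — without routing through Proposition 5 — the main obstacle would be converting the \emph{special preenvelopes} supplied by Proposition 3 into genuine envelopes, which is exactly the gap that the perfect cotorsion pair machinery closes. That conversion rests on two nontrivial structural facts proved earlier: that $^\bot dw(\mathcal{DI})$ is a thick class (Lemma 4), and hence closed under direct limits (used in Proposition 4 via \cite{gil:17:ding}, Proposition 3.2), which upgrades precovering to covering. I would therefore emphasize in the writeup that the hard analytic work lives in Lemma 4 and Proposition 4, and that Proposition 5 is the bridge that transfers covering on the left to enveloping on the right.
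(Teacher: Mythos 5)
Your proposal is correct and follows exactly the paper's own argument: the paper likewise deduces the theorem by combining Proposition 4 (covering of $^\bot dw(\mathcal{DI})$) with the equivalence of statements 1 and 3 in Proposition 5. Your additional commentary on where the real work lies (Lemma 4, Proposition 4, and the passage from special preenvelopes to envelopes) accurately reflects the structure of the paper.
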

\begin{proof}
By Proposition 5 above the class of Ding injective complexes is enveloping if and only if $^\bot dw(\mathcal{DI})$ is covering. By Proposition 4, the class $^\bot dw(\mathcal{DI})$ is covering.
\end{proof}


\begin{thebibliography}{1}


\bibitem{bennis:07:strongly}
D. Bennis, and N. Mahdou.
\newblock Strongly Gorenstein projective, injective, and flat modules.
\newblock { \em J. Pure Appl. Algebra}, 210(2): 437--445, 2007.




\bibitem{gillespie:14:stable}
D. Bravo, J. Gillespie and M. Hovey.
\newblock The stable module category of a general ring.
\newblock {preprint}, arxiv:1210.0196.


\bibitem{brown}
K. Brown.
\newblock Cohomology of groups.
\newblock { Springer-Verlag, New York}, 1982.


\bibitem{christensen}
L. Christensen, H. Holm.
\newblock The direct limit closure of perfect complexes.
\newblock { \em J. Pure Appl. Alg.}, 219(3): 449--463, 2015.

\bibitem{ding:08:ding.inj}
Mao, L., Ding, N.
\newblock Gorenstein FP-injective and Gorenstein flat modules.
\newblock { \em J. Algebra Appl.}, 07(04):491–506, 2008.



\bibitem{ding:09:ding.proj}
N. Ding, Y. Li, and L. Mao.
\newblock Strongly Gorenstein Flat Modules.
\newblock { \em J. Aust. Math. Soc.}, 86: 323–-338, 2009.

\bibitem{IE}
I. Emmanouil.
\newblock On the finiteness of Gorenstein homological dimensions.
\newblock { \em J. Algebra}, 372: 376–-396, 2012.


\bibitem{E}
I. Emmanouil.
\newblock On pure acyclic complexes.
\newblock { \em J. Algebra}, 405: 190–-213, 2016.

\bibitem{enochs:94:gorflat}
E.E. Enochs, O.M.G. Jenda and B. Torrecillas.
\newblock Gorenstein flat modules.
\newblock {\em J. Nanjing Univ.}, 10:1--9, 1994.

\bibitem{enochs:95:gorenstein}
E.E. Enochs and O.M.G. Jenda.
\newblock {Gorenstein injective and projective modules}.
\newblock {\em {Math. Zeit.}}, 220:611--633, 1995.

\bibitem{enochs:96:orthogonality}
E.E. Enochs, O.M.G. Jenda, J. Xu.
\newblock {Orthogonality in the category of complexes}.
\newblock {\em {Math. J. Okayama Univ.}}, 38:25--46, 1996.

\bibitem{enochs:04:gorflat.covers}
E.E. Enochs, O.M.G. Jenda, and J.A. Lopez-Ramos.
\newblock { The existence of Gorenstein flat covers}
\newblock {\em. Math. Scand.}, 94 (1), 46–-62, 2004.

\bibitem{EIP}
S. Estrada, A. Iacob, M. Perez
\newblock Model structures and relative Gorenstein flat modules and chain complexes.
\newblock { \em Contemporary Math.}, accepted.

\bibitem{iacob:17}
S. Estrada and A. Iacob and K. Yeomans.
\newblock Gorenstein projective precovers.
\newblock { \em Mediterr. J. Math.}, 14(1), DOI 10.1007/s00009-016-0822-5, 2017.







\bibitem{gil:gor}
J. Gillespie.
\newblock Gorenstein complexes and recollements from cotorsion pairs.
\newblock { \em Advances in Math.}, 291: 859--911, 2016.

\bibitem{gil:10:model}
J. Gillespie.
\newblock Model structures on modules over Ding-Chen rings.
\newblock { \em Homology, Homotopy Appl.}, 12(1): 61--73, 2010.


\bibitem{gil:17:ding}
J. Gillespie.
\newblock On Ding injective, Ding projective and Ding flat modules and complexes.
\newblock { \em Rocky Mountain J. Math}, 47: 2641--2673, 2017.

\bibitem{Gill04}
J. Gillespie.
\newblock The flat model structure on Ch(R).
\newblock { \em Trans. Amer. Math. Soc.}, 356: 3369--3390, 2004.


\bibitem{gil:iac}
J. Gillespie, A. Iacob.
\newblock Duality pairs, generalized Gorenstein modules and Ding injective envelopes.
\newblock {\em submitted}.





\bibitem{holm:05:gor.dim}
H. Holm.
\newblock Gorenstein homological dimensions.
\newblock { \em J. Pure Appl. Algebra}, 189(1): 167--193, 2004.



\bibitem{Geng}
J.S. Hu, Y.X. Geng, Z.W. Xie, D.D. Zhang.
\newblock Gorenstein FP-injective dimension for complexes.
\newblock { \em Com. Algebra}, 43(8): 3515–3533, 2015.


\bibitem{Sto}
J. Stovicek.
\newblock On purity and coderived and singularity categories.
\newblock  preprint, available at [arXiv:1412.1615]


\bibitem{wl}
Z. Wang, Z. Liu
\newblock FP-injective complexes and FP-injective dimension of complexes.
\newblock { \em J. Aust. Math. Soc. }, 91: 163-–187, 2011.

\bibitem{YLL}
G. Yang, Z.K. Liu, L. Liang.
\newblock Ding Projective and Ding Injective Modules.
\newblock { \em Alg. Colloquium 20(4) }, 189(1): 601–-612, 2013.


\bibitem {yang-liu11}
G. Yang and Z. Liu.
\newblock {Cotorsion pairs and model structures on $Ch(R)$}.
\newblock{ \em Proc. Edinburgh Math. Soc.}, 54: 783--797, 2011.

\bibitem{YLL13}
G. Yang, Z.K. Liu, L. Liang.
\newblock Model Structures on Categories of Complexes Over Ding-Chen Rings.
\newblock { \em Com. Alg.}, 41: 50–-69, 2013.

\bibitem{YE20}
G. Yang and S. Estrada. 
\newblock Characterizations of Ding Injective Complexes.
\newblock { \em Bull. Malays. Math. Sci. Soc.}, 43: 2385--2398, 2020.

\end{thebibliography}
\end{document}